\documentclass[11pt,a4paper]{amsart}

\usepackage[utf8]{inputenc}
\usepackage{latexsym}
\usepackage{amssymb}
\usepackage{amsmath}
\usepackage{tikz}

\title{Small deviations in lognormal Mandelbrot cascades}

\author[M. Nikula]{Miika Nikula}
\address{University of Helsinki, Department of Mathematics and Statistics,
         P.O. Box 68 , FIN-00014 University of Helsinki, Finland}
\email{miika.nikula@helsinki.fi}

\date{\today}

\newtheorem{parg}{§}
\newtheorem{defn}[parg]{Definition}
\newtheorem{thm}[parg]{Theorem}
\newtheorem{prop}[parg]{Proposition}

\newtheorem{cor}[parg]{Corollary}

\newcommand{\N}{\mathbb{N}}

\newcommand{\R}{\mathbb{R}}

\newcommand{\E}{\mathbb{E}}

\newcommand{\Prob}{\mathbb{P}}

\newcommand{\ind}{\mathbf{1}}

\newcommand{\Bcal}{\mathcal{B}}

\renewcommand{\d}{\, \mathrm{d}}

\newcommand{\eqlaw}{\stackrel{d}{=}}


\begin{document}

\maketitle

\begin{abstract}
We study small deviations in Mandelbrot cascades and some related models. Denoting by $Y$ the total mass variable of a Mandelbrot cascade generated by $W$, we show that if
$$
\lim_{x \to 0} \frac{\log \log 1/\Prob(W \leq x)}{\log \log 1/x} = \gamma > 1,
$$
then the Laplace transform of $Y$ satisfies
$$
\lim_{t \to \infty} \frac{\log \log 1/\E e^{-t Y}}{\log \log t} = \gamma.
$$
As an application, this gives new estimates for $\Prob(Y \leq x)$ for small $x > 0$. As another application of our methods, we prove a similar result for a variable arising as a total mass of a lognormal $\star$-scale invariant multiplicative chaos measure.
\end{abstract}

\section{Introduction}

We start by outlining the problem studied in this note. The Mandelbrot cascade on a binary tree is the following construction. Let $W$ be a given positive\footnote{By \emph{positive} we mean $\Prob(W > 0) = 1$.} random variable such that $\E W = 1/2$ and $\{W_\sigma\}_{\sigma \in \Sigma}$ be an i.i.d. collection of copies of $W$ indexed by the infinite binary tree $\Sigma = \bigcup_{n \geq 1} \{0,1\}^n$. The $n$-th cascade variable\footnote{To all $n \in \N$ one may also associate a random measure $M_n$ on $[0,1]$ by giving the dyadic interval naturally encoded by $\sigma \in \{0,1\}^n$ the mass $W_{\sigma_1} W_{\sigma_1 \sigma_2} \dots W_{\sigma_1 \sigma_2 \dots \sigma_n}$. However, we will not work with these cascade measures and mention them only for motivation.} $Y_n$ is defined by
$$
Y_n = \sum_{\sigma_1 \sigma_2 \dots \sigma_n \in \{0,1\}^n} W_{\sigma_1} W_{\sigma_1 \sigma_2} \dots W_{\sigma_1 \sigma_2 \dots \sigma_n}.
$$
The sequence $(Y_n)_{n \geq 1}$ is a positive martingale and as such almost surely convergent to a limit variable $Y = \lim_{n \to \infty} Y_n$ which satisfies the functional equation
\begin{equation}\label{eq:smoothing-transform}
Y \eqlaw W_0 Y_1 + W_1 Y_1,
\end{equation}
where the variables $W_0$, $W_1$, $Y_0$ and $Y_1$ are independent and $W_0 \eqlaw W_1 \eqlaw W$ and $Y_0 \eqlaw Y_1 \eqlaw Y$. A random variable $Y$ for which \eqref{eq:smoothing-transform} holds is called a \emph{fixed point of the smoothing transform associated to $W$}. For a given positive random variable $W$, the fixed points of the smoothing transform have been characterized by Durrett and Liggett \cite{duli83}. The specifics of the characterization do not concern our study here, but the following facts are good to know. First, if the smoothing transform given by $W$ has fixed points of finite mean, all the fixed points are given by constant multiples of the Mandelbrot cascade associated to $W$, and in the case of infinite mean the Mandelbrot cascade may (under rather general assumptions) be deterministically renormalized in order to obtain the fixed points \cite{aish11,we11,ma11}. Second, for a positive $W$ the fixed points of the smoothing transform are also positive, i.e. have $\Prob(Y > 0)$.

The study of the tail of the fixed points of smoothing transforms at positive infinity has a long history. Indeed, the finiteness of the moments $\E Y^p$, $p > 1$, was a central question already in the work of Mandelbrot \cite{ma74} on his cascades, answered by Kahane and Peyri\`ere \cite{kape76}. The behavior of the Laplace transform $\E e^{-tY}$ near $0$ was used by Durrett and Liggett in the characterization of the fixed points of the smoothing transforms.

Given that $\Prob(W > 0) = 1 = \Prob(Y > 0)$, the asymptotics of the probabilities of $Y$ being small are also of interest. To mention just some work on this question, in connection with the multifractal analysis of Mandelbrot cascade measures it was shown by Holley and Waymire \cite{howa92} that if there exists an $a > 0$ such that $\Prob(W \geq a) = 1$, the Laplace transform of cascade variable $Y$ satisfies $\E e^{-t Y} \leq \exp\left(-c t^b\right)$ for some constants $c > 0$ and $0 < b < 1$ depending on the distribution of $W$. In a more general study of multifractal analysis of Mandelbrot cascade measures, Molchan \cite{mo96} proved that if $\E W^{-q} < \infty$ for some $q > 0$, then also $\E Y^{-2q} < \infty$. These results have been later improved by Liu \cite{li01} and most recently Hu \cite{hu13}, who have shown much stronger results relating the asymptotics of $\Prob(W \leq x)$ near $0$ to the asymptotics of $\E e^{-t Y}$ near $\infty$ in the more general case of a smoothing transform in which the number of summands $W_i Y_i$ appearing on the right hand side of \eqref{eq:smoothing-transform} is random and the i.i.d. assumption on the $(W_i)$ is relaxed.

However, there is little earlier work on the asymptotics of $\E e^{-t Y}$ near $\infty$ in the important special case where the generator $W$ is lognormal: the best result in the literature seems to be Molchan's result on finiteness of the moments of negative order. The lognormal generator was considered already in the original work of Mandelbrot \cite{ma72}, but interest in specifically lognormal cascades has been revitalized recently by the analogies between lognormal Mandelbrot cascades and lognormal multiplicative chaos, a much more general construction of a random measure given originally by Kahane \cite{ka85}. Kahane's lognormal multiplicative chaos measures have recently been connected to problems in mathematical physics involving the exponential of the Gaussian free field, Liouville quantum gravity and the KPZ formula; we refer the reader to the recent survey of Rhodes and Vargas \cite{rhva13} for details on these connections.

Finally, we mention the work of Ostrovsky \cite{os09} in which a prediction is made for the exact form of the Mellin transform of the law of the total mass of a certain lognormal multiplicative chaos measure. While the accuracy of the asymptotics given by our result is certainly far from providing a rigorous proof to Ostrovsky's formula, it is worth noting that our result is in accordance with the prediction.

\vspace{0.3cm}

Our main result is Theorem \ref{thm:upper-bound} below, which connects the asymptotics of $\Prob(W \leq x)$ as $x \to 0$ for a class of random variables (that includes the lognormal variables) to the asymptotics of $\E e^{-t Y}$ as $t \to \infty$. This theorem is then applied to fixed points of the smoothing transform (Theorem \ref{thm:cascades}) and thus to (possibly renormalized) Mandelbrot cascades, and then to a simple example of multiplicative chaos measures (Theorem \ref{thm:chaos}).

The notation $X \preceq Z$ means that $X$ is stochastically dominated by $Z$, i.e. that $\Prob(X \geq x) \leq \Prob(Z \geq x)$ for all $x \in \R$.

\begin{thm}\label{thm:upper-bound}
Let $W$ and $Y$ be positive random variables satisfying
\begin{equation}\label{eq:smoothing-inequality}
Y \succeq W_0 Y_0 + W_1 Y_1,
\end{equation}
where $(Y_0,Y_1)$ is an independent pair of copies of $Y$, independent of $(W_0,W_1)$, and $W_0 \eqlaw W_1 \eqlaw W$. Suppose further that there exist $\gamma > 1$ and $x' \in ]0,1[$ such that
\begin{equation}\label{eq:w-small-tail}
\Prob(W \leq x) \leq \exp\left( - c (-\log x)^\gamma \right) \quad \textrm{for all } x \leq x'.
\end{equation}
Then for any $\alpha \in [1,\gamma[$ there exists a constant $t_\alpha > 0$ such that for all $t \geq t_\alpha$ we have
\begin{equation}\label{eq:main-estimate}
\E e^{-t Y} \leq \exp\left(- c_\alpha (\log t)^\alpha \right) \quad \textrm{for all } t \geq t_\alpha.
\end{equation}
\end{thm}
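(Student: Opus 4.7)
The starting point is to turn the stochastic domination \eqref{eq:smoothing-inequality} into a functional inequality for the Laplace transform $\phi(t) := \E e^{-tY}$. Since $y \mapsto e^{-ty}$ is non-increasing, $Y \succeq W_0 Y_0 + W_1 Y_1$ gives
$$
\phi(t) \leq \E \exp\bigl(-t(W_0 Y_0 + W_1 Y_1)\bigr) = \bigl(\E \phi(tW)\bigr)^2,
$$
using the independence of $W_0,W_1,Y_0,Y_1$ together with $W_0 \eqlaw W_1 \eqlaw W$ and $Y_0 \eqlaw Y_1 \eqlaw Y$. Set $f(t) := -\log \phi(t)$, which is non-negative and non-decreasing, and satisfies $\lim_{t \to \infty} f(t) = \infty$ since $\Prob(Y > 0) = 1$. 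The inequality reads
$$
f(t) \geq -2 \log \E e^{-f(tW)}.
$$

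I would then extract the key recursion by splitting $\E e^{-f(tW)}$ at the event $\{tW \leq \tau\}$ for a parameter $\tau \in (0, x' t)$. On $\{tW > \tau\}$ monotonicity of $f$ gives $e^{-f(tW)} \leq e^{-f(\tau)}$; on $\{tW \leq \tau\}$ the trivial bound $e^{-f(tW)} \leq 1$ yields at most $\Prob(W \leq \tau/t) \leq \exp\bigl(-c(\log(t/\tau))^\gamma\bigr)$ by \eqref{eq:w-small-tail}. Thus
$$
f(t) \geq -2\log\left(e^{-f(\tau)} + \exp\bigl(-c(\log(t/\tau))^\gamma\bigr)\right),
$$
and balancing the two exponents via the choice $f(\tau) = c(\log(t/\tau))^\gamma$ produces the clean recursion
$$
f(t) \geq 2 f(\tau) - 2 \log 2.
$$

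The final step is to iterate. Choose $t_0$ large enough that $f(t_0) > 2\log 2$ and that all subsequent applications of \eqref{eq:w-small-tail} are valid, and define $t_n$ recursively by $\log(t_{n+1}/t_n) = (f(t_n)/c)^{1/\gamma}$, so that the recursion applies with $t = t_{n+1}$ and $\tau = t_n$. Letting $g_n := f(t_n) - 2\log 2$ turns the recursion into $g_{n+1} \geq 2 g_n$, hence $f(t_n) \geq 2\log 2 + 2^n(f(t_0) - 2\log 2)$ grows like $2^n$, while $\log t_n - \log t_0 = \sum_{k=0}^{n-1}(f(t_k)/c)^{1/\gamma}$ is a geometric-type sum of order $2^{n/\gamma}$. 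Eliminating $n$ gives $f(t_n) \gtrsim (\log t_n)^\gamma$ along the sequence, and monotonicity of $f$ then extends the bound to all large $t$, yielding $f(t) \geq c_\alpha (\log t)^\alpha$ for any $\alpha < \gamma$.

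The main technical work lies in the bookkeeping of the last step. The subadditivity $(a+b)^{1/\gamma} \leq a^{1/\gamma} + b^{1/\gamma}$, valid for $\gamma \geq 1$, is needed to separate the dominant $2^{n/\gamma}$ contribution in $\sum(f(t_k)/c)^{1/\gamma}$ from a sub-dominant linear-in-$n$ correction produced by the $-2\log 2$ terms; the freedom to choose $\alpha$ strictly below $\gamma$ provides precisely the slack required to absorb both this correction and the lower-order terms that arise when $f(t_0)$ is only slightly above $2\log 2$, without degrading the qualitative form of the exponent.
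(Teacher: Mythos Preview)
Your argument has one genuine gap: you write $\phi(t) \le (\E\phi(tW))^2$ ``using the independence of $W_0,W_1,Y_0,Y_1$'', but the theorem does \emph{not} assume $W_0$ and $W_1$ are independent (only that $(Y_0,Y_1)$ is an independent pair, independent of $(W_0,W_1)$; the paper's remark after the statement makes this explicit). Conditioning on $(W_0,W_1)$ gives only $\phi(t)\le \E[\phi(tW_0)\phi(tW_1)]$. The paper closes this with Cauchy--Schwarz, obtaining $\phi(t)\le \E\phi(tW)^2$. With that corrected starting point your splitting step still works: $\E\phi(tW)^2 \le e^{-2f(\tau)} + \Prob(W\le\tau/t)$, and balancing $2f(\tau)=c(\log(t/\tau))^\gamma$ yields the recursion $f(t)\ge 2f(\tau)-\log 2$, after which your iteration proceeds unchanged (with $\log(t_{n+1}/t_n)=(2f(t_n)/c)^{1/\gamma}$ in place of your formula).

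Modulo this fix, your route is genuinely different from the paper's and in several respects more direct. The paper first appeals to Molchan's result that $\E Y^{-q}<\infty$ for all $q>0$ to secure the seed exponent $\alpha=1$, and then proves a bootstrap (Proposition~\ref{prop:upper-bound}) showing that any valid exponent $\alpha$ can be pushed up to $\alpha+(\gamma-\alpha)/(\gamma+1)$; the proof of that proposition uses a stopping-time decomposition along an i.i.d.\ product $W_1\cdots W_n$ and an optimization over $n$. Iterating the bootstrap drives the exponent to $\gamma$. Your argument needs neither the external input from Molchan (only $f(t)\to\infty$, which follows from $\Prob(Y>0)=1$) nor the stopping-time machinery: the doubling recursion $f(t_{n+1})\ge 2f(t_n)-O(1)$ already forces $f(t_n)\gtrsim 2^n$, and the reverse inequality $f(t_k)\le f(t_n)2^{-(n-k)}+O(1)$ bounds $\log t_{n+1}=\log t_0+\sum_{k\le n}(f(t_k)/c)^{1/\gamma}$ by $C f(t_n)^{1/\gamma}+O(n)$, which together with $n=O(\log f(t_n))$ gives $f(t_n)\ge c_\alpha(\log t_{n+1})^\alpha$ for any $\alpha<\gamma$. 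The paper's approach trades this endgame bookkeeping for a longer chain of intermediate estimates; yours is shorter but requires more care in the final accounting.
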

\noindent \emph{Remarks.} Note that $W_0$ and $W_1$ are not assumed to be independent. The assumption $W_0 \eqlaw W_1$ could easily be relaxed, in which case instead of the assumption \eqref{eq:w-small-tail} one would assume that the lighter of the negative tails of $\log W_0$ and $\log W_1$ would satisfy a similar condition. Similarly, instead of two summands $W_i Y_i$, $i=0,1$, we could consider an arbitrary but fixed finite number $N$ of summands.

\begin{thm}\label{thm:cascades}
Suppose $W$ is a positive random variable satisfying
$$
\lim_{x \to 0} \frac{\log \log 1/\Prob(W \leq x)}{\log \log 1/x} = \gamma > 1.
$$
Let $Y$ be the fixed point of the smoothing transform associated to $W$ or in other words the limit variable of the (possibly renormalized) Mandelbrot cascade generated by $W$, i.e. suppose that
$$
Y \eqlaw W_0 Y_0 + W_1 Y_1.
$$
Then
$$
\lim_{t \to 0} \frac{\log \log 1/\E e^{-tY}}{\log \log t} = \gamma.
$$
\end{thm}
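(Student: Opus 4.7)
My plan is to establish the two inequalities
$$
\liminf_{t \to \infty} \frac{\log \log 1/\E e^{-tY}}{\log \log t} \geq \gamma
\quad \text{and} \quad
\limsup_{t \to \infty} \frac{\log \log 1/\E e^{-tY}}{\log \log t} \leq \gamma
$$
separately. The first is immediate from Theorem~\ref{thm:upper-bound}: the hypothesis means that for any $\gamma_1 \in ]1, \gamma[$ and any $c > 0$, the tail estimate $\Prob(W \leq x) \leq \exp(-c(-\log x)^{\gamma_1})$ holds for all sufficiently small $x > 0$, so Theorem~\ref{thm:upper-bound} applied with $\gamma_1$ in place of $\gamma$ yields $\E e^{-tY} \leq \exp(-c_\alpha (\log t)^\alpha)$ for every $\alpha \in [1, \gamma_1[$ and all large $t$. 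Taking one more logarithm gives $\log \log 1/\E e^{-tY} \geq \alpha \log \log t + O(1)$, and letting first $\alpha$ and then $\gamma_1$ increase to $\gamma$ produces the liminf bound.

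The opposite direction requires a small lower bound on $\Prob(Y \leq x)$. The hypothesis furnishes the matching lower bound $\Prob(W \leq x) \geq \exp(-(-\log x)^{\gamma_2})$ for any fixed $\gamma_2 > \gamma$ and all small enough $x$. Fix $M > 0$ so large that $\Prob(Y \leq M) > 0$ (possible since $Y < \infty$ almost surely). Using $Y \eqlaw W_0 Y_0 + W_1 Y_1$ with the four variables on the right mutually independent, the inclusion
$$
\{W_0 \leq x/(2M),\, W_1 \leq x/(2M),\, Y_0 \leq M,\, Y_1 \leq M\} \subseteq \{W_0 Y_0 + W_1 Y_1 \leq x\}
$$
together with independence gives $\Prob(Y \leq x) \geq \Prob(W \leq x/(2M))^2 \Prob(Y \leq M)^2$. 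Inserting the lower bound for $\Prob(W \leq \cdot)$ and absorbing the constants via $(\log(2M/x))^{\gamma_2} \leq C(-\log x)^{\gamma_2}$ for small $x$, I obtain $\Prob(Y \leq x) \geq \exp(-C(-\log x)^{\gamma_2})$, so that $\log \log 1/\Prob(Y \leq x) \leq \gamma_2 \log \log 1/x + O(1)$.

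To turn this into a bound on the Laplace transform, I use the elementary one-point estimate $\E e^{-tY} \geq e^{-1} \Prob(Y \leq 1/t)$, valid for every $t > 0$. Setting $x = 1/t$ in the previous display and taking another logarithm gives $\log \log 1/\E e^{-tY} \leq \gamma_2 \log \log t + O(1)$; letting $\gamma_2 \downarrow \gamma$ then completes the argument. I do not anticipate any real obstacle: all the genuine work sits inside Theorem~\ref{thm:upper-bound}, while the upper direction rests only on the smoothing-transform equation, independence of $W_0, W_1, Y_0, Y_1$, and the trivial Markov-type estimate above. The only minor bookkeeping point is to verify that the various multiplicative and additive constants all get absorbed into the $O(1)$ after taking a second logarithm, which is harmless because the final quantities are $\gamma$-th powers of logarithms.
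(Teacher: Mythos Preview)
Your argument is correct. The $\liminf$ direction is exactly the paper's: translate the limit hypothesis on $W$ into the tail bound \eqref{eq:w-small-tail} with any exponent $\gamma_1<\gamma$, apply Theorem~\ref{thm:upper-bound}, and let $\gamma_1\uparrow\gamma$.

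For the $\limsup$ direction you take a genuinely different and more elementary route. The paper invokes Proposition~\ref{prop:lower-bound}, which iterates the inequality $\phi(t)\geq \Prob(W\leq t^{-1/2})^2\phi(t^{1/2})^2$ along the scales $t^{2^{-k}}$ to obtain directly $\phi(t)\geq\exp(-c_{\gamma_+}(\log t)^{\gamma_+})$. You instead use the smoothing identity just once to get $\Prob(Y\leq x)\geq \Prob(W\leq x/(2M))^2\Prob(Y\leq M)^2$, and then pass to the Laplace transform via $\E e^{-tY}\geq e^{-1}\Prob(Y\leq 1/t)$. Both arguments rely on the full independence of $W_0,W_1,Y_0,Y_1$ (which holds in the fixed-point setting), and both lose nothing at the $\log\log$ scale, so for the purposes of Theorem~\ref{thm:cascades} they are equivalent. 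Your version is shorter and avoids the iteration entirely; the paper's Proposition~\ref{prop:lower-bound}, on the other hand, yields the sharper quantitative bound $\phi(t)\geq\exp(-c(\log t)^{\gamma_+})$ with an explicit constant, and is stated separately because it is reused later (in the proof of Theorem~\ref{thm:chaos}) where a direct comparison to a cascade is needed. The only cosmetic point: the phrase ``for any $c>0$'' in your first paragraph is stronger than what the hypothesis immediately gives, but it is true (interpolate exponents) and in any case irrelevant, since Theorem~\ref{thm:upper-bound} only needs some $c>0$.
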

We state the resulting estimate for $\Prob(Y \leq x)$ as a corollary.
\begin{cor}\label{cor:cascades}
Let $W$ and $Y$ be as in Theorem \ref{thm:cascades}. Then also
$$
\lim_{x \to 0} \frac{\log \log 1/\Prob(Y \leq x)}{\log \log 1/x} = \gamma.
$$
\end{cor}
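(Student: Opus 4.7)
The plan is to deduce the corollary from Theorem \ref{thm:cascades} by a standard Tauberian-type conversion between the small-value behaviour of a positive random variable and the $t \to \infty$ behaviour of its Laplace transform. Theorem \ref{thm:cascades} gives two-sided control: for every $\alpha \in (1,\gamma)$ and $\beta > \gamma$, one has $\E e^{-tY} \leq \exp(-(\log t)^\alpha)$ and $\E e^{-tY} \geq \exp(-(\log t)^\beta)$ for all sufficiently large $t$. The corollary will be obtained by combining these with the Chebyshev-type inequality $\Prob(Y \leq x) \leq e^{tx} \E e^{-tY}$ and with the dual bound $\Prob(Y \leq x) \geq \E e^{-tY} - e^{-tx}$, optimizing over $t$ in each case.

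For the lower bound on $\log\log 1/\Prob(Y\leq x)$ (i.e.\ the upper bound on $\Prob(Y\leq x)$), I would apply Markov to $e^{-tY}$ and choose $t = 1/x$. Then $e^{tx} = e$, and the Laplace-transform upper bound gives $\Prob(Y\leq x) \leq e\exp(-(\log (1/x))^\alpha)$ for all $x$ small enough and any fixed $\alpha\in(1,\gamma)$. Taking $\log\log$ and letting $\alpha\uparrow\gamma$ yields $\liminf_{x\to 0} \frac{\log\log 1/\Prob(Y\leq x)}{\log\log 1/x} \geq \gamma$.

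For the matching upper bound (i.e.\ a lower bound on $\Prob(Y\leq x)$) I would use the trivial splitting
\[
\E e^{-tY} \;=\; \E\bigl[e^{-tY}\ind_{\{Y\leq x\}}\bigr] + \E\bigl[e^{-tY}\ind_{\{Y>x\}}\bigr] \;\leq\; \Prob(Y\leq x) + e^{-tx},
\]
so that $\Prob(Y\leq x) \geq \E e^{-tY} - e^{-tx}$. Fixing $\beta > \gamma$ and choosing $t = (\log(1/x))^{\beta+1}/x$, one has $\log t \sim \log(1/x)$ as $x\to 0$, so the Laplace lower bound gives $\E e^{-tY} \geq \exp(-(\log(1/x))^{\beta}(1+o(1)))$, while $e^{-tx} = \exp(-(\log (1/x))^{\beta+1})$ is of strictly smaller order. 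Thus $\Prob(Y\leq x) \geq \tfrac12\exp(-(\log(1/x))^{\beta}(1+o(1)))$ for $x$ small enough, which on taking $\log\log$ and letting $\beta\downarrow\gamma$ yields $\limsup_{x\to 0}\frac{\log\log 1/\Prob(Y\leq x)}{\log\log 1/x}\leq\gamma$. Combining the two inequalities gives the corollary.

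I do not expect any serious obstacle: once Theorem \ref{thm:cascades} is available, both directions are soft and only require the elementary exponential Chebyshev-type inequalities together with the right calibration of $t$ as a function of $x$. The only mild care needed is in the lower-bound step, where $t$ must be chosen large enough (a poly-log factor above $1/x$) that the truncation error $e^{-tx}$ does not overwhelm the Laplace lower bound; the particular choice $t = (\log(1/x))^{\beta+1}/x$ achieves this while still keeping $\log t \sim \log(1/x)$.
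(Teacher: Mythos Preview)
Your proposal is correct and follows essentially the same approach as the paper: Markov's inequality with $t=1/x$ for the upper bound on $\Prob(Y\leq x)$, and the splitting $\E e^{-tY}\leq \Prob(Y\leq x)+e^{-tx}$ for the lower bound. The only difference is cosmetic: in the lower-bound step the paper takes the simpler choice $t=x^{-2}$ (so $\log t = 2\log(1/x)$, introducing a harmless factor $2^{\gamma_+}$ in the exponent), whereas you take $t=(\log(1/x))^{\beta+1}/x$; both calibrations make $e^{-tx}$ negligible against the Laplace lower bound and both yield the same $\log\log$ asymptotics.
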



\vspace{0.3cm}

\noindent In Section \ref{se:applications} we exhibit an example of a lognormal multiplicative chaos measure for which we state and prove a corresponding result.


\section{Proof of Theorem \ref{thm:upper-bound}}

The proof of Theorem \ref{thm:upper-bound} proceeds by starting from the result of Molchan on the finiteness of moments of $Y$ of negative order and using this information to get better bounds on the decay of the Laplace transform of $Y$ near infinity. This procedure is then iterated, giving better and better estimates. We state the following proposition, which adapts the methods of Liu \cite{li01} and Barral \cite{ba04} to this iteration procedure, as a result of its own.

\begin{prop}\label{prop:upper-bound}
Suppose $W$ and $Y$ satisfy \eqref{eq:smoothing-inequality} and \eqref{eq:w-small-tail} and further that for some $\alpha \in [1,\gamma[$ there exist constants $t_\alpha, c_\alpha > 0$ such that the estimate \eqref{eq:main-estimate} is satisfied. Then for any $\alpha' \in \left[\alpha,\alpha+(\gamma-\alpha)/(\gamma+1)\right]$ there exist constants $t_{\alpha'}, c_{\alpha'} > 0$ such that \eqref{eq:main-estimate} holds with $(\alpha', t_{\alpha'}, c_{\alpha'})$ in place of $(\alpha, t_\alpha, c_\alpha)$.
\end{prop}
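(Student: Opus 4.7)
\medskip

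\noindent \textbf{Plan of proof.} My plan is to exploit the functional inequality \eqref{eq:smoothing-inequality} by iterating it along a binary tree of depth $n$ (with $n$ chosen depending on $t$) and then truncate using the tail assumption \eqref{eq:w-small-tail} on $W$. Set $\phi(t) := \E e^{-tY}$. Conditioning on the generators and using the independence of the $Y_i$'s from the $W_i$'s (and from each other), the dominance \eqref{eq:smoothing-inequality} gives $\phi(t) \leq \E[\phi(tW_0)\phi(tW_1)]$; iterating this $n$ times yields
\[
\phi(t) \leq \E \prod_{|\sigma| = n} \phi(tW^{(\sigma)}),
\]
where $W^{(\sigma)}$ denotes the product of generators along branch $\sigma$ of the tree, and each such $W^{(\sigma)}$ is distributed as $\prod_{i=1}^n W_i$ for i.i.d.\ copies $W_i$ of $W$.

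Next I would introduce a threshold $s = e^{-\beta}$ and split according to whether $\min_{|\sigma|=n} W^{(\sigma)} \geq s$. By monotonicity of $\phi$, on the good event the product is at most $\phi(ts)^{2^n}$; off the event it is trivially bounded by $1$. The complementary probability is at most $2^n \, \Prob(\prod_{i=1}^n W_i < s) \leq 2^n n \, \Prob(W < s^{1/n})$ by two applications of the union bound, and \eqref{eq:w-small-tail} controls the last factor by $\exp(-c(\beta/n)^\gamma)$ (for $\beta/n$ above the threshold $-\log x'$). Applying the inductive estimate $\phi(ts) \leq \exp(-c_\alpha (u - \beta)^\alpha)$ (with $u := \log t$, valid for $u - \beta \geq \log t_\alpha$) then produces
\[
\phi(t) \leq \exp\bigl(-2^n c_\alpha (u - \beta)^\alpha\bigr) + 2^n n \exp\bigl(-c \beta^\gamma / n^\gamma\bigr).
\]

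The remaining step is to choose $n = n(u)$ and $\beta = \beta(u)$ so that both right-hand terms are dominated by $\exp(-c_{\alpha'} u^{\alpha'})$. One needs $n$ large enough that $2^n$ compensates for the exponent gap between $u^\alpha$ and $u^{\alpha'}$ (forcing roughly $n \gtrsim (\alpha'-\alpha)\log u/\log 2$), and simultaneously $\beta$ large enough that $\beta^\gamma/n^\gamma \gtrsim u^{\alpha'}$, all subject to $\beta < u$ and to keeping the Taylor correction $(u-\beta)^\alpha = u^\alpha - \alpha\beta u^{\alpha-1}+\cdots$ under control. \textbf{The main obstacle} will be carrying out this optimization cleanly: one has to track the interplay of multiplicative constants and polylogarithmic factors, verify that the chosen $s$ and $s^{1/n}$ remain inside the range where \eqref{eq:w-small-tail} applies, and confirm that the first-order correction does not overwhelm the exponential gain from the factor $2^n$. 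The specific bound $\alpha' - \alpha \leq (\gamma-\alpha)/(\gamma+1)$ should emerge as the best improvement extractable from a single application of the step; iterating the proposition from the base case $\alpha = 1$ (available via a Markov estimate from Molchan's finiteness of negative moments) will then drive $\alpha'$ up to any value in $[1,\gamma)$ and so yield Theorem~\ref{thm:upper-bound}.
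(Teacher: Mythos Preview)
Your approach is correct and in fact more direct than the paper's. The paper does \emph{not} iterate the smoothing inequality along the full binary tree. Instead it introduces $\psi(t)=\E\,\phi(tW)^2$, uses Cauchy--Schwarz on $\E\,\phi(tW_0)\phi(tW_1)$ to obtain $\phi(t)\le\psi(t)$, and then iterates the relation $\psi(t)\le\Prob(W\le t^{-1/2})+\psi(t^{1/2})\,\E\,\psi(tW)$ along a \emph{linear} chain of i.i.d.\ copies of $W$, organised via the stopping time $\tau_t=\inf\{k:W_1\cdots W_k\le t^{-1/2}\}$. This yields
\[
\phi(t)^2 \le \sum_{k=1}^n \psi(t^{1/2})^k\,\Prob(\tau_t=k) + \psi(t^{1/2})^{n+1},
\]
and optimising in $k$ and $n$ gives precisely the increment $(\gamma-\alpha)/(\gamma+1)$, so that an outer induction is needed to reach $\gamma$.

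Your tree iteration is more efficient: with your displayed two-term bound, taking $n=\lceil(\alpha'-\alpha)\log u/\log 2\rceil$ and $\beta=n\,u^{\alpha'/\gamma}$ one has $\beta/u\asymp(\log u)\,u^{\alpha'/\gamma-1}\to 0$ for every fixed $\alpha'<\gamma$, so $(u-\beta)^\alpha\ge u^\alpha/2^\alpha$, the first term is $\le\exp(-c' u^{\alpha'})$, and the second term is $\le 2^n n\,\exp(-c u^{\alpha'})$, which is also $\le\exp(-c'' u^{\alpha'})$ since the prefactor is only polynomial in $u$. The range checks ($s^{1/n}\le x'$ and $ts\ge t_\alpha$) are immediate. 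Thus your scheme delivers \emph{any} $\alpha'<\gamma$ in a single step and proves Theorem~\ref{thm:upper-bound} directly, without the iteration of Proposition~\ref{prop:upper-bound}; your expectation that the specific bound $(\gamma-\alpha)/(\gamma+1)$ will ``emerge'' is therefore too modest. What you gain is a shorter, self-contained argument; what the paper's route buys is that the Cauchy--Schwarz reduction to $\psi$ handles the possible correlation of $(W_0,W_1)$ at the outset (your argument also handles it, but only because the union bounds you use never invoke independence within sibling pairs).
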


\begin{proof}
Throughout the proof we will for brevity denote $\phi(t) = \E e^{-t Y}$ and $\psi(t) = \E \, \phi(tW)^2$. Since $\Prob(W=0) = \Prob(Y=0) = 0$, the functions $\phi$ and $\psi$ are both strictly decreasing homeomorphisms of $[0,\infty[$ onto $]0,1]$. The assumption that $\phi$ satisfies \eqref{eq:main-estimate} for $\alpha \in [1,\gamma[$ implies that $\psi$ satisfies
\begin{align*}
\psi(t) = \E \phi(tW)^2 & \leq \Prob( W \leq t^{-1/2} ) + \phi(t^{1/2})^2 \\
& \leq e^{-\frac{c}{2^\gamma} (\log t)^\gamma} + e^{-\frac{2 c_\alpha}{2^\alpha} (\log t)^\alpha}
\end{align*}
for all $t \geq t_\alpha$. It follows that there exist constants $\tilde t_\alpha > 0$ and $C_\alpha > 0$ such that
\begin{equation}\label{eq:psi-decay-rate}
\psi(t) \leq \exp\left(-C_\alpha (\log t)^\alpha \right) \quad \textrm{for all } t \geq \tilde t_\alpha.
\end{equation}

The main estimate of the proof is derived next. We use the distributional inequality \eqref{eq:smoothing-inequality} and the Cauchy--Schwarz inequality to deduce
\begin{align*}
\phi(t) = \E e^{-t Y} & \leq \E e^{-t (W_0 Y_0 + W_1 Y_1)} = \E \phi(t W_0) \phi(t W_1) \\
& \leq \sqrt{\E \phi(t W_0)^2} \sqrt{\E \phi(t W_1)^2} = \E \phi(t W)^2 \\
& = \psi(t).
\end{align*}
It follows that for all $t \geq t' > 0$ we have
\begin{equation}\label{eq:first-step}
\phi(t)^2 \leq \psi(t)^2 \leq \psi(t') \psi(t).
\end{equation}
Using this observation with $\tilde t = t W$ and $\tilde t' = t^{1/2} \geq 1$, we get the estimate
\begin{align}
\psi(t) & = \E \phi(tW)^2 \leq \Prob\left( W \leq t^{-1/2} \right) + \psi(t^{1/2}) \E \psi(tW) \ind_{\left\{ W > t^{-1/2} \right\}} \nonumber \\
& = \Prob\left( W_1 \leq t^{-1/2} \right) + \psi(t^{1/2}) \E \phi(t W_1 W_2)^2 \ind_{\left\{ W_1 > t^{-1/2} \right\}} \label{eq:second-step},
\end{align}
where $W_1$ and $W_2$ denote independent copies of $W$. We plug this estimate into \eqref{eq:first-step} to obtain, for all $t \geq 1$,
$$
\phi(t)^2 \leq \psi(t^{1/2}) \Prob\left( W_1 \leq t^{-1/2} \right) + \psi(t^{1/2})^2 \E \phi(t W_1 W_2)^2 \ind_{\left\{ W_1 > t^{-1/2} \right\}}.
$$
Moreover, by using \eqref{eq:first-step} as above, for an arbitrary positive random variable $V$ which is independent of $W' \eqlaw W$ we have
\begin{align}
\E \phi(t V W')^2 \ind_{\{V > t^{-1/2}\}} & \leq \Prob\left( V > t^{-1/2}, V W' \leq t^{-1/2} \right) \nonumber \\
& \:\:\:\:\: + \psi(t^{1/2}) \E \phi(t V W' W'')^2 \ind_{\{V > t^{1/2}, V W' > t^{1/2}\}}, \label{eq:iteration-step}
\end{align}
where $W'' \eqlaw W$ is independent of $V$ and $W'$. Let $(W_n)_{n \geq 1}$ be an i.i.d. sequence of copies of $W$ and define the stopping time
$$
\tau_t = \inf \left\{ n \geq 1 \,\big\vert\, W_1 W_2 \dots W_n \leq t^{-1/2} \right\}.
$$
Using \eqref{eq:iteration-step} iteratively for $V = W_1$, $V = W_1 W_2$, \dots we obtain, for any $n \in \N$ and all $t \geq 1$,
\begin{equation}\label{eq:phi2-estimate}
\phi(t)^2 \leq \sum_{k=1}^n \psi(t^{1/2})^k \Prob(\tau_t = k) + \psi(t^{1/2})^{n+1} \E \phi(t W_1 W_2 \dots W_{n+1})^2 \ind_{\{\tau_t > n\}}.
\end{equation}

It remains to compute that the estimate \eqref{eq:phi2-estimate} indeed gives \eqref{eq:main-estimate} for $\alpha' \in [\alpha,\alpha+(\gamma-\alpha)/(\gamma+1)]$. For any $t \geq 1$, the probability $\Prob(\tau_t = k)$ may be estimated by
\begin{align*}
\Prob(\tau_t = k) & \leq \Prob( W_1 W_2 \dots W_k \leq t^{-1/2} ) \\
& \leq k \Prob( W \leq t^{-1/2k} ) \\
& \leq k \exp\left( -\frac{c}{2^\gamma} k^{-\gamma} (\log t)^\gamma \right)
\end{align*}
as long as
\begin{equation}\label{eq:t-k-condition}
t^{-\frac{1}{2k}} \leq x' \quad \Longleftarrow \quad k \leq 2 \frac{\log t}{-\log x'}.
\end{equation}
The decay rate \eqref{eq:psi-decay-rate} derived for $\psi$ in turn gives, for $t \geq \tilde t_\alpha^2$,
$$
\psi(s^{1/2})^k \leq \exp\left(-\frac{C_\alpha}{2^\alpha} \,k\, (\log t)^\alpha \right)
$$
and therefore the terms of the sum in \eqref{eq:phi2-estimate} may be estimated by
\begin{align}
\psi(t^{1/2})^k \Prob(\tau_t = k) & \leq k \exp\left( -\frac{C_\alpha}{2^\alpha} \,k\, (\log t)^\alpha - \frac{c}{2^\gamma} k^{-\gamma} (\log t)^\gamma \right) \nonumber \\
& =: k \exp(-f_t(k)) \label{eq:term-estimate}
\end{align}
for $t \geq \tilde t_\alpha^2$ and $k \in \N$ satisfying \eqref{eq:t-k-condition}. Finding the minimum of $k \mapsto f_t(k)$ on $]0,\infty[$ is easy: the zero of the derivative is at
$$
k_0 = \left( \frac{c \gamma}{C_\alpha} 2^{\alpha-\gamma} \right)^\frac{1}{\gamma+1} (\log t)^\frac{\gamma-\alpha}{\gamma+1},
$$
so for $t \geq \tilde t_\alpha^2$ we have
\begin{align*}
f_t(k) & \geq \frac{C_\alpha}{2^\alpha} \,k_0\, (\log t)^\alpha + \frac{c}{2^\gamma} k_0^{-\gamma} (\log t)^\gamma \\
& = C' (\log t)^{\alpha + \frac{\gamma-\alpha}{\gamma+1}} + C'' (\log t)^{\gamma - \gamma \frac{\gamma-\alpha}{\gamma+1}}, \\
& = C (\log t)^{\alpha + \frac{\gamma-\alpha}{\gamma+1}}
\end{align*}
where the constants $C', C'' > 0$ and $C = C'+C''$ only depend on the constants $\gamma, c, \alpha$ and $C_\alpha$. Plugging this into \eqref{eq:term-estimate} gives
\begin{equation}\label{eq:term-estimate-final}
\psi(t^{1/2})^k \Prob(\tau_t = k) \leq k \exp\left(-C (\log t)^{\alpha + \frac{\gamma-\alpha}{\gamma+1}} \right)
\end{equation}
for $t \geq \tilde t_\alpha^2$ and $k \in \N$ satisfying \eqref{eq:t-k-condition}. Next we estimate the final term in \eqref{eq:phi2-estimate} and choose the value of $n$. It is enough to use the crude estimates $\phi \leq 1$ and $\Prob(\tau_t > n) \leq 1$ and the decay rate \eqref{eq:psi-decay-rate} to get
\begin{align}
\psi(t^{1/2})^{n+1} \E \phi(t W_1 W_2 \dots W_{n+1})^2 \ind_{\{\tau_t > n\}} & \leq \psi(t^{1/2})^n \nonumber \\
& \leq \exp\left(-C_\alpha n (\log t)^\alpha\right) \label{eq:last-term-estimate}
\end{align}
for $t \geq \tilde t_\alpha^2$. Choosing $n = \left\lceil (\log t)^\frac{\gamma-\alpha}{\gamma+1} \right\rceil$ in \eqref{eq:phi2-estimate}, we see that \eqref{eq:t-k-condition} is satisfied for all sufficiently large $t$ and $k \leq n$, so by \eqref{eq:term-estimate-final} and \eqref{eq:last-term-estimate} we have shown that there exists a $\hat t \geq \tilde t^2$ such that for all  $t \geq \hat t$
\begin{align*}
\phi(t)^2 & \leq \sum_{k=1}^n k \exp\left(-C (\log t)^{\alpha + \frac{\gamma-\alpha}{\gamma+1}} \right) + \exp\left(-C_\alpha n (\log t)^\alpha \right) \\
& \leq \frac{1}{2} \left( \left\lceil (\log t)^\frac{\gamma-\alpha}{\gamma+1} \right\rceil + 1 \right)^2 \exp\left(-C (\log t)^{\alpha + \frac{\gamma-\alpha}{\gamma+1}} \right) + \exp\left(-C_\alpha (\log t)^{\alpha + \frac{\gamma-\alpha}{\gamma+1}} \right).
\end{align*}
For any $\alpha' \in \left[\alpha,\alpha+(\gamma-\alpha)/(\gamma+1)\right]$, the prefactor in the first term above may be absorbed in order to obtain the desired constants $c_{\alpha'}, t_{\alpha'} > 0$ for which we have the estimate
$$
\phi(t) \leq \exp\left(-c_{\alpha'} (\log t)^{\alpha'}\right)
$$
for all $t \geq t_{\alpha'}$. The proof is complete.
\end{proof}

Theorem \ref{thm:upper-bound} now follows by iteration.

\begin{proof}[Proof of Theorem \ref{thm:upper-bound}.]
Let $Y$ and $W$ satisfy \eqref{eq:smoothing-inequality} and \eqref{eq:w-small-tail}. It is a well-known result of Molchan\footnote{To be exact, Molchan considers only Mandelbrot cascades, but for instance the proof given by Liu \cite{li01} in a more general situation also works in our more restricted setup with only cosmetic modifications.} \cite{mo96} that, since $\E W^{-q} < \infty$ for all $q > 0$, for any $q > 0$ there exists a constant $C_q > 0$ for which $\phi(t) \leq C_q t^q$ for all sufficiently large $t$. It follows that \eqref{eq:main-estimate} holds, for some constants $t_1, c_1 > 0$, for $\alpha = 1$.

Let $\alpha_0 = 1$. By Proposition \ref{prop:upper-bound}, for any $\alpha \in [1,1+(\gamma-1)/(\gamma+1)]$ we may find the constants $t_\alpha,c_\alpha > 0$ for which \eqref{eq:main-estimate} holds. Denote $\alpha_1 = 1 + (\gamma-1)/(\gamma+1)$ and generally $\alpha_n = \alpha_{n-1} + (\gamma-\alpha_{n-1})/(\gamma+1)$ for $n \in \N$. Suppose that we have shown that for some $n \in \N$, for all $\alpha \in [1,\alpha_n]$ there exist constants $t_\alpha,c_\alpha > 0$ such that \eqref{eq:main-estimate} holds. Then if $\alpha \in [\alpha_n,\alpha_{n+1}]$, 
the existence of the constants $t_\alpha,c_\alpha > 0$ for which \eqref{eq:main-estimate} holds follows Proposition \ref{prop:upper-bound}. By induction, we see that \eqref{eq:main-estimate} holds for all $\alpha \in [1, \lim_{n \to \infty} \alpha_n[$. But since
$$
\alpha_0 = 1 \quad \textrm{and} \quad \alpha_n = \frac{\gamma}{\gamma+1} \alpha_{n-1} + \frac{\gamma}{\gamma+1} \quad \textrm{for } n \in \N,
$$
the general term is given explicitly by $\alpha_n = \sum_{k=1}^{n-1} \left( \frac{\gamma}{\gamma+1} \right)^k + 2 \left( \frac{\gamma}{\gamma+1} \right)^n$ for $n > 0$,
from which it is immediate that
$$
\lim_{n \to \infty} \alpha_n = \sum_{k=1}^\infty \left( \frac{\gamma}{\gamma+1} \right)^k = \gamma.
$$
\end{proof}

Theorem \ref{thm:upper-bound} gives an upper bound for the Laplace transform of $Y$. In order to prove Theorem \ref{thm:cascades} we also need the following proposition, again adapting the earlier method of Liu \cite{li01}, to give a lower bound.

\begin{prop}\label{prop:lower-bound}
Suppose the positive random variable $W$ satisfies, for some $\gamma > 1$ and $x' \in ]0,1[$,
\begin{equation}\label{eq:w-small-tail-reversed}
\Prob(W \leq x) \geq \exp\left( - c (-\log x)^\gamma \right) \quad \textrm{for all } x \leq x'
\end{equation}
and that $Y$ satisfies
\begin{equation}\label{eq:smoothing-inequality-reversed}
Y \preceq W_0 Y_0 + W_1 Y_1,
\end{equation}
where $(W_0,W_1)$ and $(Y_0,Y_1)$ are independent pairs of copies of $W$ and $Y$, independent of each other. Then there exist constants $t_\gamma,c_\gamma > 0$ such that
\begin{equation}\label{eq:main-estimate-reversed}
\phi(t) \geq \exp\left( -c_\gamma (\log t)^\gamma \right) \quad \textrm{for all } t \geq t_\gamma.
\end{equation}
\end{prop}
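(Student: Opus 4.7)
The plan is to apply the distributional inequality \eqref{eq:smoothing-inequality-reversed} just once and reduce the lower bound on $\phi(t) = \E e^{-tY}$ directly to the small-ball estimate \eqref{eq:w-small-tail-reversed} for a single copy of $W$. Because $\Prob(W \leq x)$ already decays at rate $\exp(-c(\log 1/x)^\gamma)$, the target rate $\exp(-c_\gamma(\log t)^\gamma)$ should emerge from a single dyadic unfolding, with no iteration over the tree needed, in sharp contrast to the upper bound.

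First, since $x \mapsto e^{-tx}$ is decreasing, the stochastic dominance $Y \preceq W_0 Y_0 + W_1 Y_1$ yields $\phi(t) \geq \E e^{-t(W_0 Y_0 + W_1 Y_1)}$. Conditioning on $(W_0, W_1)$ and using the joint independence of $W_0, W_1, Y_0, Y_1$ together with $W_i \eqlaw W$ and $Y_i \eqlaw Y$ factorises the right-hand side as $\E[\phi(tW_0)\phi(tW_1)] = (\E \phi(tW))^2$. Next, I would bound $\E \phi(tW)$ from below by restricting to the small-ball event $\{W \leq 1/t\}$, on which $\phi(tW) \geq \phi(1)$ by monotonicity of $\phi$. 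Since $Y$ is positive and a.s.\ finite, $\phi(1) \in (0,1)$ is a fixed positive constant; applying \eqref{eq:w-small-tail-reversed} at $x = 1/t$ (valid for $t \geq 1/x'$) then gives
$$
\phi(t) \geq \phi(1)^2 \Prob(W \leq 1/t)^2 \geq \phi(1)^2 \exp\bigl(-2c (\log t)^\gamma\bigr).
$$

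The remaining step is pure bookkeeping: for $t$ large enough that $(\log t)^\gamma$ dominates $2 \log(1/\phi(1))$, the constant prefactor $\phi(1)^2$ can be absorbed into the exponent, yielding \eqref{eq:main-estimate-reversed} with any $c_\gamma > 2c$ and a suitable $t_\gamma \geq 1/x'$. I do not foresee any real obstacle; the matching of the two rates is automatic because reverse stochastic dominance produces a product lower bound through the monotone Laplace transform, and independence of $W_0$ and $W_1$ decouples the product into a square of one-variable expectations, so the iteration machinery developed for the upper bound is simply not necessary here.
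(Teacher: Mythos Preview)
Your argument is correct. The key observation that a single application of the smoothing inequality already delivers the full rate $\exp(-c_\gamma(\log t)^\gamma)$ is valid: restricting to $\{W\le 1/t\}$ gives $\E\phi(tW)\ge\phi(1)\,\Prob(W\le 1/t)$, and \eqref{eq:w-small-tail-reversed} evaluated at $x=1/t$ produces exactly the desired exponent. The absorption of the fixed factor $\phi(1)^2>0$ into the exponent is routine since $\gamma>1$ (indeed $\gamma\ge 1$ would already suffice for this step).

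The paper proceeds differently: it restricts to the larger event $\{W\le t^{-1/2}\}$, obtaining $\phi(t)\ge\Prob(W\le t^{-1/2})^2\,\phi(t^{1/2})^2$, and then iterates this dyadically down to $\phi(t^{2^{-n}})$ with $n$ chosen so that $t^{2^{-n}}\approx 1/x'$. Summing the resulting geometric series in $2^{-(\gamma-1)k}$ yields the constant $c_\gamma=c/(2^{\gamma-1}-1)$. So your remark that ``iteration \dots\ is simply not necessary here'' is accurate for the lower bound as stated, but the paper does in fact iterate. What the iteration buys is a sharper constant when $\gamma$ is large (for $\gamma>2$ one has $c/(2^{\gamma-1}-1)<2c$), whereas your one-step argument gives a constant just above $2c$ regardless of $\gamma$. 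Since the proposition only asserts the existence of \emph{some} $c_\gamma$, your shorter route is entirely adequate and arguably preferable.
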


\begin{proof}
By \eqref{eq:smoothing-inequality-reversed}, for all $t \geq 1$ we have
\begin{align*}
\phi(t) = \E e^{-t W_0 Y_0 - t W_1 Y_1} & = \E \phi(t W_0) \phi(t W_1) = \left( \E \phi(tW) \right)^2 \\
& \geq \Prob(W \leq t^{-1/2})^2 \phi(t^{1/2})^2.
\end{align*}
Iterating this estimate, for all $n \in \N$ we have
\begin{equation}\label{eq:lower-bound-iterated}
\phi(t) \geq \left( \prod_{k=1}^n \Prob\left( W \leq t^{-2^{-k}} \right)^{2^k} \right) \phi\left(t^{2^{-n}}\right)^{2^n}
\end{equation}
Let $n$ be the greatest integer such that $t^{-2^{-n}} \leq x'$, i.e. the unique integer such that
$$
t^{-2^{-n}} \leq x' < t^{-2^{-n-1}} \quad \Longleftrightarrow \quad 2^n \leq \frac{\log t}{-\log x'} < 2^{n+1}.
$$
With this choice \eqref{eq:lower-bound-iterated} gives
\begin{align*}
\phi(t) & \geq \left( \prod_{k=1}^n \Prob\left( W \leq t^{-2^{-k}} \right)^{2^k} \right) \phi(1/x')^{\frac{\log t}{-\log x'}} \\
& \geq \exp \left( -c \sum_{k=1}^n 2^k \left( \log t^{2^{-k}} \right)^\gamma - \frac{\log \phi(1/x')}{\log x'} \log t \right) \\
& = \exp \left( -c \left( \sum_{k=1}^n 2^{-(\gamma-1)k} \right) (\log t)^\gamma - \frac{\log \phi(1/x')}{\log x'} \log t \right) \\
& \geq \exp \left( - \frac{c}{2^{\gamma-1}-1} (\log t)^\gamma - \frac{\log \phi(1/x')}{\log x'} \log t \right).
\end{align*}
Since $\gamma > 1$, it is now clear that there exist constants $t_\gamma,c_\gamma > 0$ such that
$$
\phi(t) \geq \exp\left( -c_\gamma (\log t)^\gamma \right) \quad \textrm{for } t \geq t_\gamma.
$$
\end{proof}

\section{Applications to Mandelbrot cascades and lognormal multiplicative chaos}\label{se:applications}

We present applications of the preceding analysis to two cases of interest. Theorem \ref{thm:cascades}, the first application, concerns fixed points of the smoothing transform and thus applies to Mandelbrot cascades.

\begin{proof}[Proof of Theorem \ref{thm:cascades}.]
Let $W$ be a positive random variable satisfying
$$
\lim_{x \to 0} \frac{\log \log 1/\Prob(W \leq x)}{\log \log 1/x} = \gamma > 1.
$$
For any $\gamma_-,\gamma_+$ such that $1 < \gamma_- < \gamma < \gamma_+$ there exists a $x' \in ]0,1[$ for which
$$
\exp\left( -(\log 1/x)^{\gamma_+} \right) \leq \Prob(W \leq x) \leq \exp\left( -(\log 1/x)^{\gamma_-} \right) \quad \textrm{for all } 0 < x \leq x'.
$$
From Theorem \ref{thm:upper-bound} and Proposition \ref{prop:lower-bound} it follows that there exist constants $t',c_{\gamma_-},c_{\gamma_+} > 0$ such that
$$
\exp\left( -c_{\gamma_+} (\log t)^{\gamma_+} \right) \leq \phi(t) \leq \exp\left( -c_{\gamma_-} (\log t)^{\gamma_-} \right) \quad \textrm{for all } t \geq t'
$$
or equivalently
$$
\log c_{\gamma_-} + \gamma_- \log \log t \leq \log \log 1/\phi(t) \leq \log c_{\gamma_+} + \gamma_+ \log \log t \quad \textrm{for all } t \geq t'.
$$
It follows that
$$
\gamma_- \leq \liminf_{t \to \infty} \frac{\log \log 1/\phi(t)}{\log \log 1/t} \leq \limsup_{t \to \infty} \frac{\log \log 1/\phi(t)}{\log \log 1/t} \leq \gamma_+.
$$
Since $\gamma_- < \gamma$ and $\gamma_+ > \gamma$ are arbitrary, this implies the claim.
\end{proof}

Our estimate for the decay of the Laplace transform $\E e^{-t Y}$ as $t \to \infty$ results in an estimate for the probabilities $\Prob(Y \leq x)$ as $x \to 0$, as stated in Corollary \ref{cor:cascades}.

\begin{proof}[Proof of Corollary \ref{cor:cascades}.]
An sufficient upper bound for $\Prob(Y \leq x)$ is given by Markov's inequality:
$$
\Prob\left(Y \leq x\right) \leq e \E e^{-\frac{1}{x} Y}.
$$
By Theorem \ref{thm:cascades}, for any $\gamma_- < \gamma$ we have
\begin{equation}\label{eq:probabilities-upper}
\liminf_{x \to 0} \frac{\log \log 1/\Prob\left(Y \leq x\right)}{\log \log 1/x} \geq \liminf_{x \to 0} \frac{\log\left(-1 + \log 1/\E e^{-\frac{1}{x} Y}\right)}{\log \log 1/x} \geq \gamma_-.
\end{equation}

For the lower bound we use the estimate
\begin{align*}
\E e^{-x^{-2} Y} & = \E e^{-x^{-2} Y} \ind_{\{Y \leq x\}} + \E e^{-x^{-2} Y} \ind_{\{Y > x\}} \\
& \leq \Prob(Y \leq x) + e^{-x^{-1}}
\end{align*}
By Theorem \ref{thm:cascades}, for any $\gamma_+ > \gamma$, for all $x > 0$ small enough we have
$$
\E e^{-x^{-2} Y} \geq e^{-(\log x^{-2})^{\gamma_+}} = e^{-2^{\gamma_+} (\log 1/x)^{\gamma_+}},
$$
implying that
$$
\Prob(Y \leq x) \geq \E e^{-x^{-2} Y} - e^{-x^{-1}} \geq \frac{1}{2} e^{-2^{\gamma_+} (\log 1/x)^{\gamma_+}}
$$
for all $x > 0$ small enough. Thus
$$
\log \log 1/\Prob(Y \leq x) \leq \log \left(\log 2 + 2^{\gamma_+} (\log 1/x)^{\gamma_+}\right),
$$
which implies
\begin{equation}\label{eq:probabilities-lower}
\limsup_{x \to 0} \frac{\log \log 1/\Prob(Y \leq x)}{\log \log 1/x} \leq \gamma_+.
\end{equation}
Together the bounds \eqref{eq:probabilities-upper} and \eqref{eq:probabilities-lower} imply the claim.
\end{proof}


We then consider an application of Theorem \ref{thm:upper-bound} to lognormal $\star$-scale invariant multiplicative chaos. We refer the reader to the recent survey \cite{rhva13} of Rhodes and Vargas for an introduction to this class of random measures, and give here an application to a particular random measure on $\R$ that is both simple enough to have a compact definition, yet which illustrates the range of applicability of Theorem \ref{thm:upper-bound}.

\begin{defn}
Let $M$ be a positive random measure on $\R^d$ that satisfies the distributional scaling relation
$$
\left( M(A) \right)_{A \in \Bcal(\R^d)} \eqlaw \left( \int_A e^{\omega_\varepsilon(x)} M^\varepsilon (\d x) \right)_{A \in \Bcal(\R^d)}, \quad \omega \perp M^\varepsilon
$$
where $M^\varepsilon$ is a positive random measure on $\R^d$ with the law given by
$$
\left( M^\varepsilon(A) \right)_{A \in \Bcal(\R^d)} \eqlaw \left( M(\epsilon^{-1} A) \right)_{A \in \Bcal(\R^d)}
$$
and $\left( \omega_\varepsilon(x) \right)_{x \in \R^d, \varepsilon \in ]0,1]}$ is a Gaussian process.

If $M$ satisfies the scaling relation above for a given Gaussian process $\omega$, we say that $M$ is lognormal $\star$-scale invariant.
\end{defn}

Under certain conditions on the process $\omega$, it has been shown that a nontrivial lognormal $\star$-scale invariant $M$ can be constructed as lognormal multiplicative chaos, a construction of a positive random measure given by Kahane in \cite{ka85} and recently extended to the so-called critical case by Duplantier, Rhodes, Sheffield and Vargas \cite{drsv12-1,drsv12-2}. Conversely Rhodes, Sohier and Vargas \cite{rhsova12} have shown that Kahane's construction gives (essentially) all the stationary lognormal $\star$-scale invariant random measures such that the masses of open sets have finite moments of order $1+\delta$ for some $\delta > 0$. Constructions of lognormal $\star$-scale invariant with infinite expectations of masses of open sets have been given, but their (essential) uniqueness has yet to be proven.

We then briefly summarize the construction of the lognormal $\star$-scale invariant random measure to be considered below and refer the reader to \cite{bknsw13} for a more detailed exposition for this kind of a construction. We take $d = 1$ and $\omega$ as defined by
$$
\omega_\varepsilon(x) = \beta X_\varepsilon(x) - \frac{\beta^2}{2} \E X_\varepsilon(x)^2,
$$
where $\beta > 0$ is a parameter and $X$ is a centered Gaussian process with the covariance
$$
\E X_\varepsilon(x) X_{\varepsilon'}(y) = \begin{cases}
\log \frac{1}{\varepsilon \vee \varepsilon'} - \left(\frac{1}{\varepsilon \vee \varepsilon'}-1\right) |x-y|, & |x-y| < \varepsilon \vee \varepsilon' \\
\log \frac{1}{|x-y|} - 1 + |x-y|, & \varepsilon \vee \varepsilon' \leq |x-y| \leq 1 \\
0, & 1 < |x-y|\\
\end{cases}.
$$
The field $X$ may be visualized by considering white noise $W$ on the upper half-plane with control measure $\d \lambda = \d x \d y/y^2$, and integrating $W$ on the truncated triangles
$$
T_\varepsilon(x) = \left\{ (x',y') \in \R \times ]0,\infty[ \,\big\vert\, \max(2|x-x'|) \leq y' \leq 1 \right\},
$$
i.e. taking $X_\varepsilon(x) = W(T_\varepsilon(x))$. The $\star$-scale invariant random measures associated to $\omega$ defined this way may be constructed as
$$
M(\d x) = \lim_{\varepsilon \to 0} e^{\beta X_\varepsilon(x) - \frac{\beta^2}{2} \E X_\varepsilon(x)^2} \d x,
$$
and it can be shown that, restricting the measures involved to an arbitrary bounded interval, the limit $M$ exists almost surely in the sense of weak convergence of measures. For $\beta < \sqrt{2}$ the limit $M$ is almost surely positive on any interval but for $\beta \geq \sqrt{2}$ the limit is almost surely null. In the critical case $\beta = \sqrt{2}$ a $\star$-scale invariant measure is obtained as a limit in probability by renormalizing the density by $\sqrt{\log 1/\varepsilon}$, see \cite{drsv12-1,drsv12-2}, and it is expected that for $\beta > \sqrt{2}$ another deterministic renormalization will in the distributional limit result in a $\star$-scale invariant measure though this is yet to be proven. The theorem below applies to any of these measures (i.e. for any $\beta > 0$), but for notational convenience we state it for $\beta \in ]0,\sqrt{2}[$.
 
\begin{thm}\label{thm:chaos}
Let $M$ be the lognormal $\star$-scale invariant multiplicative chaos measure on $\R$ defined above for some choice of the parameter $\beta \in ]0,\sqrt{2}[$. The Laplace transform $\phi(t) = \E \exp(-t M([0,1]))$ of the mass of the unit interval satisfies
$$
\lim_{t \to \infty} \frac{\log \log 1/\phi(t)}{\log \log t} = 2.
$$
\end{thm}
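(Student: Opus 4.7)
The plan is to bring Theorem \ref{thm:chaos} into the framework of Theorem \ref{thm:upper-bound} and Proposition \ref{prop:lower-bound} via the $\star$-scale invariance, applied once at scale $\varepsilon = 1/2$. Writing $Y = M([0,1])$ and splitting the integration domain at $1/2$, the scaling relation gives
$$
Y \eqlaw \int_0^{1/2} e^{\omega_{1/2}(x)} M^{1/2}(\d x) + \int_{1/2}^1 e^{\omega_{1/2}(x)} M^{1/2}(\d x),
$$
with $\omega$ independent of $M^{1/2}$, and each sub-integral involves a copy $Y_i = M^{1/2}(I_i)$ of $Y$ via the translation invariance and the scaling law $M^{1/2}(\cdot) \eqlaw M(2 \cdot)$. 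Throughout, $\omega_{1/2}(x) = \beta X_{1/2}(x) - \tfrac{\beta^2}{2}\log 2$ with $X_{1/2}$ a continuous centered Gaussian process of constant variance $\log 2$.

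For the upper bound on $\phi$, I would bound each sub-integral below by pulling out the minimum of the exponential weight:
$$
Y \succeq W_0 Y_0 + W_1 Y_1, \qquad W_i = \exp\Bigl(\inf_{x \in I_i} \omega_{1/2}(x)\Bigr), \quad I_i = [i/2, (i+1)/2].
$$
The Borell--TIS inequality applied to the continuous Gaussian process $X_{1/2}$ of variance $\log 2$ gives $\Prob(\inf_{I_i} X_{1/2} \leq -s) \leq \exp(-(s - s_0)^2/(2\log 2))$ for $s \geq s_0 := -\E\inf_{I_i} X_{1/2}$, hence $\Prob(W_i \leq x) \leq \exp(-c(\log 1/x)^2)$ for small $x$, i.e.\ assumption \eqref{eq:w-small-tail} with $\gamma = 2$. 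The remark after Theorem \ref{thm:upper-bound} allows $W_0, W_1$ to be dependent, so Theorem \ref{thm:upper-bound} yields $\phi(t) \leq \exp(-c_\alpha (\log t)^\alpha)$ for every $\alpha < 2$, and consequently $\liminf_{t\to\infty} \log\log(1/\phi(t))/\log\log t \geq 2$.

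For the lower bound, the symmetric argument with suprema yields $Y \preceq W_0^+ Y_0 + W_1^+ Y_1$ where $W_i^+ = \exp(\sup_{I_i} \omega_{1/2})$. Standard Gaussian small-ball estimates for the supremum of a continuous Gaussian field give the matching lower bound $\Prob(W_i^+ \leq x) \geq \exp(-c(\log 1/x)^2)$, i.e.\ \eqref{eq:w-small-tail-reversed} with $\gamma = 2$, so that an application of Proposition \ref{prop:lower-bound} provides $\phi(t) \geq \exp(-c_\gamma(\log t)^2)$ and hence $\limsup_{t\to\infty} \log\log(1/\phi(t))/\log\log t \leq 2$. Combining the two bounds gives the claim.

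The main technical obstacle is a dependence issue that Proposition \ref{prop:lower-bound} rules out in its hypotheses: in the decomposition above, $W_0^+$ and $W_1^+$ are positively correlated because they are functionals of the same Gaussian field $\omega_{1/2}$, and similarly $Y_0, Y_1$ arise as masses of the same random measure $M^{1/2}$ on adjacent intervals and so are positively correlated. Both of these are positive correlations between increasing functionals, so FKG-type arguments (applied to the decreasing functions $w \mapsto \phi(tw)$ and $y \mapsto e^{-ty}$) show that the relevant products factorize in the favorable direction, and the proofs of Proposition \ref{prop:lower-bound} and (for the $Y_i$) Proposition \ref{prop:upper-bound} adapt with only cosmetic changes. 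Verifying this dependence-handling carefully is the place where some work is needed, while the Gaussian estimates for $W_i, W_i^+$ are standard.
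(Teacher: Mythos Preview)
Your FKG patch for the upper bound goes in the wrong direction, and this is a genuine gap. In the proof of Proposition~\ref{prop:upper-bound}, the step
\[
\phi(t) \leq \E e^{-t(W_0Y_0+W_1Y_1)} = \E\,\phi(tW_0)\phi(tW_1)
\]
relies on the independence of $Y_0$ and $Y_1$. With your adjacent intervals $[0,1/2]$, $[1/2,1]$ the masses $Y_0,Y_1$ are positively associated, and positive association of two variables gives $\E f(Y_0)g(Y_1)\ge \E f(Y_0)\,\E g(Y_1)$ for \emph{decreasing} $f,g$ as well as increasing ones. Hence (conditionally on $W_0,W_1$) one gets
\[
\E\bigl[e^{-tW_0Y_0}e^{-tW_1Y_1}\,\big\vert\,W_0,W_1\bigr] \;\ge\; \phi(tW_0)\phi(tW_1),
\]
which is the opposite of what you need to reach $\phi(t)\le\psi(t)$ and run the iteration. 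So the adaptation is not cosmetic; the inequality chain breaks at the first step. The paper's fix is structural rather than analytic: it applies $\star$-scale invariance at $\varepsilon=1/3$, drops the middle third, and uses the \emph{separated} intervals $[0,1/3]$ and $[2/3,1]$. For this covariance kernel the increments $X_\varepsilon-X_{1/3}$ over those two intervals are genuinely independent, so Theorem~\ref{thm:upper-bound} applies as stated.

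For the lower bound your FKG idea does point the right way (positive association now yields $\E e^{-t(W_0^+Y_0+W_1^+Y_1)}\ge (\E\phi(tW^+))^2$, which is what Proposition~\ref{prop:lower-bound} needs), though you would still owe a proof that the pair $(Y_0,Y_1)$ is positively associated and a Gaussian small-ball estimate for $\sup_{I_i}X_{1/2}$. The paper sidesteps all of this: it obtains the lower bound by Kahane's convexity inequality, comparing $M$ to a dominated Gaussian field whose chaos is a lognormal Mandelbrot cascade, and then invokes Proposition~\ref{prop:lower-bound} on the cascade where all independence hypotheses hold by construction.
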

The following estimate is derived from Theorem \ref{thm:cascades} in the same way as Corollary \ref{cor:cascades} from Theorem \ref{thm:cascades}.
\begin{cor}\label{cor:chaos}
Let $M$ as in Theorem \ref{thm:chaos}. Then also
$$
\lim_{x \to 0} \frac{\log \log 1/\Prob(M([0,1]) \leq x)}{\log \log 1/x} = 2.
$$
\end{cor}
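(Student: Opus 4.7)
The plan is to replicate the argument of Corollary \ref{cor:cascades} almost verbatim, with $Z := M([0,1])$ in place of $Y$ and $\gamma = 2$ as supplied by Theorem \ref{thm:chaos}. First, that theorem provides, for any pair $\gamma_-,\gamma_+$ with $1 < \gamma_- < 2 < \gamma_+$, a threshold $t'$ such that
$$
\exp\!\left(-(\log t)^{\gamma_+}\right) \leq \phi(t) \leq \exp\!\left(-(\log t)^{\gamma_-}\right) \quad \textrm{for all } t \geq t',
$$
where $\phi(t) = \E e^{-tZ}$; the leading constants are absorbed by shrinking $\gamma_-$ and enlarging $\gamma_+$ slightly, exactly as in the proof of Corollary \ref{cor:cascades}.

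For the lower bound on the target ratio, I would invoke Markov's inequality in the form $\Prob(Z \leq x) \leq e\,\phi(1/x)$, substitute the upper estimate for $\phi$ at $t = 1/x$, and take $\log\log$ of both sides to obtain
$$
\liminf_{x \to 0} \frac{\log\log 1/\Prob(Z \leq x)}{\log\log 1/x} \geq \gamma_-.
$$
Since $\gamma_- < 2$ is arbitrary, the liminf is at least $2$.

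For the matching upper bound on the limsup, I would use the elementary decomposition
$$
\phi(x^{-2}) = \E e^{-x^{-2}Z}\ind_{\{Z \leq x\}} + \E e^{-x^{-2}Z}\ind_{\{Z > x\}} \leq \Prob(Z \leq x) + e^{-x^{-1}}
$$
combined with the lower estimate $\phi(x^{-2}) \geq \exp(-2^{\gamma_+}(\log 1/x)^{\gamma_+})$. Because $e^{-x^{-1}}$ decays stretched-exponentially while the right-hand side decays only at a polylogarithmic rate in the exponent, the first summand dominates for all sufficiently small $x$, yielding $\Prob(Z \leq x) \geq \tfrac{1}{2}\exp(-2^{\gamma_+}(\log 1/x)^{\gamma_+})$. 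This gives $\limsup_{x \to 0}\log\log 1/\Prob(Z \leq x)/\log\log 1/x \leq \gamma_+$, and the arbitrariness of $\gamma_+ > 2$ concludes.

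There is no substantive obstacle: Theorem \ref{thm:chaos} already delivers the two-sided asymptotics of the Laplace transform, and the transfer back to small-ball probabilities is precisely the Markov-and-split manipulation used in Corollary \ref{cor:cascades}, as the author's remark preceding the statement makes explicit. The only bookkeeping point to check is the comparison $e^{-x^{-1}} \ll \exp(-(\log 1/x)^{\gamma_+})$ as $x \to 0$, which is immediate.
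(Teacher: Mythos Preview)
Your proposal is correct and matches the paper's approach exactly: the paper does not give a separate proof of Corollary~\ref{cor:chaos} but states that it is derived from Theorem~\ref{thm:chaos} in the same way as Corollary~\ref{cor:cascades} from Theorem~\ref{thm:cascades}, which is precisely the Markov-plus-split argument you wrote out.
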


\vspace{0.3cm}

\begin{proof}[Proof of Theorem \ref{thm:chaos}.]
The required lower bound for the Laplace transform follows from a comparison to a lognormal Mandelbrot cascade using Kahane's convexity inequalities and Proposition \ref{prop:lower-bound}. This kind of a comparison between multiplicative chaos and Mandelbrot cascades has been utilized already by Kahane \cite{ka85} and more recently, for example, in \cite{bknsw13,drsv12-1} so we will only sketch the argument. One constructs a Gaussian field $(Y_\varepsilon(x))_{x \in [0,1], \varepsilon \in ]0,1]}$ in such a way that the covariance of $Y$ is dominated by the covariance of $X$, i.e. $\E Y_\varepsilon(x) Y_\varepsilon(y) \leq \E X_\varepsilon(x) X_\varepsilon(y)$ for all $0 \leq x,y \leq 1$ and $0 < \varepsilon \leq 1$, and that the measure
$$
\tilde M(\d x) = \lim_{\varepsilon \to 0} e^{\beta Y_\varepsilon(x) - (\beta^2/2) \E Y_\varepsilon(x)^2} \d x
$$
coincides with a lognormal Mandelbrot cascade measure on $[0,1]$, multiplied by an independent lognormal factor. Kahane's convexity inequality then states that one has
$$
\E F(\tilde M([0,1])) \leq \E F(M([0,1]))
$$
for any convex $F:[0,\infty[ \to [0,\infty[$ that grows at most polynomially near $\infty$. Especially, by Proposition \ref{prop:lower-bound} there exist constants $t_2,c_2 > 0$ such that
\begin{equation}\label{eq:chaos-laplace-lower-bound}
\exp\left(-c_2 (\log t)^2\right) \leq \E e^{-t \tilde M([0,1])} \leq \E e^{-t M([0,1])}
\end{equation}
for all $t \geq t_2$, since a lognormal variable $W$ satisfies
$$
\Prob(W \leq w) \geq \exp\left(-c (\log 1/w)^2\right)
$$
for small enough $w > 0$, with some constant $c > 0$ depending on the mean and variance of $\log W$.

The upper bound for the Laplace transform is derived by using $\star$-scale invariance and Theorem \ref{thm:upper-bound}. By $\star$-scale invariance of $M$, we may write
\begin{align*}
Y := M([0,1]) & = M([0,1/3]) + M(]1/3,2/3[) + M([2/3,1]) \\
& \geq M([0,1/3]) +  M([2/3,1]) \\
& = \int_0^{1/3} e^{\omega_{1/3}(x)} M^{1/3}(\d x) + \int_{2/3}^1 e^{\omega_{1/3}(x)} M^{1/3}(\d x) \\
& \geq W_0 Y_0 + W_1 Y_1,
\end{align*}
where we have denoted $Y_0 = M^{1/3}([0,1/3])$, $Y_1 = M^{1/3}([2/3,1])$ and
$$
W_0 = \frac{1}{3} \inf_{x \in [0,1/3]} e^{\omega_{1/3}(x)} \quad \textrm{and} \quad W_1 = \frac{1}{3} \inf_{x \in [2/3,1]} e^{\omega_{1/3}(x)}.
$$
The pairs $(W_0,W_1)$ and $(Y_0,Y_1)$ are independent of each other, since $\omega$ and $M^{1/3}$ are, by $\star$-scale invariance, independent. The stationarity of $(X_{1/3}(x))_{x \in [0,1]}$ implies that $W_0 \eqlaw W_1$. It follows from our construction that
\begin{gather*}
M^{1/3}([0,1/3]) = \lim_{\varepsilon \to 0} \int_0^{1/3} e^{\beta \left(X_\varepsilon(x) - X_{1/3}(x)\right) - \frac{\beta^2}{2} \E X_\varepsilon(x)^2 + \frac{\beta^2}{2} \E X_{1/3}(x)^2} \d x, \\
M^{1/3}([2/3,1]) = \lim_{\varepsilon \to 0} \int_{2/3}^1 e^{\beta \left(X_\varepsilon(x) - X_{1/3}(x)\right) - \frac{\beta^2}{2} \E X_\varepsilon(x)^2 + \frac{\beta^2}{2} \E X_{1/3}(x)^2} \d x
\end{gather*}
and from the covariance structure of $X$ we may check that the fields
$$
\left(X_\varepsilon(x) - X_{1/3}(x)\right)_{x \in [0,1/3], \varepsilon < 1/3} \quad \textrm{and} \quad \left(X_\varepsilon(x) - X_{1/3}(x)\right)_{x \in [2/3,1], \varepsilon < 1/3}
$$
are independent of each other. It follows that $Y_0$ and $Y_1$ are independent. Finally, from $\star$-scale invariance we see that $Y_0 \eqlaw Y_1 \eqlaw Y$. To apply Theorem \ref{thm:upper-bound}, all that remains is to bound $\Prob(W_0 \leq w)$.

The value of $W_0$ is determined by the minimum of the centered Gaussian field $X_{1/3}$ on $[0,1/3]$. Good bounds for the probabilities of extremal values of a Gaussian process being large are given by the Borell--Tsirelson--Ibrahimov--Sudakov inequality (see e.g. \cite{ad90}), though the following bound can certainly be obtained from weaker results. In our case, since the covariance $\E X_{1/3}(x) X_{1/3}(y)$ is bounded and Lipschitz on $(x,y) \in [0,1/3]^2$, there exist constants $c',C,A > 0$ such that for all $a \geq A$ we have
$$
\Prob\left( \inf_{x \in [0,1/3]} X_{1/3}(x) \leq -a \right) \leq C \exp\left(-c' a^2\right).
$$
A short computation shows that this is equivalent to
$$
\Prob(W_0 \leq w) \leq C \exp\left(-c' \left( -\frac{1}{\beta}\log w - \left(\frac{\beta}{2}+\frac{1}{\beta}\right) \log 3 \right)^2\right)
$$
for sufficiently small values of $w > 0$. For $0 < c < c/\beta^2$ we thus have $w' > 0$ such that
$$
\Prob(W_0 \leq w) \leq \exp\left(-c(-\log w)^2\right)
$$
for all $0 < w \leq w'$. By Theorem \ref{thm:upper-bound}, we conclude that for any $1 \leq \alpha < 2$ there exist constants $t_\alpha,c_\alpha > 0$ such that
\begin{equation}\label{eq:chaos-laplace-upper-bound}
\E e^{-t M([0,1])} = \E e^{-tY} \leq \exp\left( -c_\alpha (\log t)^\alpha \right) \quad \textrm{for all } t \geq t_\alpha.
\end{equation}

The theorem follows from \eqref{eq:chaos-laplace-lower-bound} and \eqref{eq:chaos-laplace-upper-bound} just as in the proof of Theorem \ref{thm:cascades}.
\end{proof}

\end{document}